\let\mathcal\mathscr
\def\mainmatter{\renewcommand{\baselinestretch}{1.1}\normalfont}
\def\l@section{\@tocline{1}{0pt}{0pc}{}{}}
\def\l@subsection{\@tocline{2}{0pt}{1.5pc}{}{}}
\def\l@subsubsection{\@tocline{3}{0pt}{2pc}{}{}}
\def\shb{\mathcal{B}}
\def\shd{\mathcal{D}}
\def\shh{\mathcal{H}}
\def\shj{\mathcal{J}}
\def\shl{\mathcal{L}}
\def\shm{\mathcal{M}}
\def\shn{\mathcal{N}}
\def\sho{\mathcal{O}}
\newcommand{\C}{\mathbb{C}}
\newcommand{\N}{\mathbb{N}}
\newcommand{\R}{\mathbb{R}}
\newcommand{\bD}{\boldsymbol{D}}
\newcommand{\Rhom}{R\shh\!om}
\newcommand{\rb}{\mathrm{b}}
\newcommand{\coh}{\mathrm{coh}}
\newcommand{\hol}{\mathrm{hol}}
\newcommand{\rhol}{\mathrm{rhol}}
\newcommand{\Mod}{\mathrm{Mod}}
\newcommand{\cc}{{\C\textup{-c}}}
\newcommand{\rc}{{\R\textup{-c}}}
\newcommand{\XS}{X\times S}
\newcommand{\YS}{Y\times S}
\newcommand{\DXS}{\shd_{\XS/S}}
\newcommand{\DXSa}{\shd_{\XS^*/S^*}}
\newcommand{\pOS}{p^{-1}\sho_S}
\newcommand{\pOSa}{p^{-1}\sho_{S^*}}
\newcommand{\Di}{{}_{\scriptscriptstyle\mathrm{D}}i}
\DeclareMathOperator{\tho}{\mathit{T}\mathcal{H}\mathit{om}}
\DeclareMathOperator{\RH}{RH}
\let\TH\THH
\DeclareMathOperator{\Char}{Char}
\DeclareMathOperator{\rD}{\mathsf{D}}
\DeclareMathOperator{\Hom}{Hom}
\DeclareMathOperator{\Sol}{Sol}
\DeclareMathOperator{\pSol}{{}^\mathrm{p}Sol}
\DeclareMathOperator{\supp}{Supp}
\DeclareMathOperator{\Db}{\mathfrak{Db}}
\let\emptyset\varnothing
\let\setminus\smallsetminus
\let\leq\leqslant
\let\geq\geqslant
\let\oldbullet\bullet
\def\bullet{\raisebox{1pt}{${\scriptstyle\oldbullet}$}}
\def\cf{cf.\kern.3em}
\def\eg{e.g.\kern.3em}
\def\ie{i.e.,\ }
\newcommand{\sfrac}[2]{{#1}/{#2}}
\theoremstyle{plain}
\newtheorem{theorem}{Theorem}[section]
\newtheorem{proposition}[theorem]{Proposition}
\newtheorem{lemma}[theorem]{Lemma}
\theoremstyle{definition}
\newtheorem{example}[theorem]{Example}
\newtheorem{remark}[theorem]{Remark}
\newtheorem{PO}{Property}
\newcommand{\RedefinitSymbole}[1]{%
\expandafter\let\csname old\string#1\endcsname=#1
\let#1=\relax
\newcommand{#1}{\csname old\string#1\endcsname\,}%
}
\let\ra\rightarrow
\def\to{\mathchoice{\longrightarrow}{\rightarrow}{\rightarrow}{\rightarrow}}
\def\mto{\mathchoice{\longmapsto}{\mapsto}{\mapsto}{\mapsto}}
\def\hto{\mathrel{\lhook\joinrel\to}}
\begin{document}
\author[T. Monteiro Fernandes]{Teresa Monteiro Fernandes}
\address[T. Monteiro Fernandes]{Centro de Matem\'atica e Aplica\c{c}\~{o}es Fundamentais -- Centro de investiga\c c\~ao Operacional e Departamento de Matem\' atica da FCUL, Edif\'icio C 6, Piso 2, Campo Grande, 1700, Lisboa, Portugal}
\email{mtfernandes@fc.ul.pt}

\author[C.~Sabbah]{Claude Sabbah}
\address[C.~Sabbah]{CMLS, École polytechnique, CNRS, Université Paris-Saclay\\
F--91128 Palaiseau cedex\\
France}
\email{Claude.Sabbah@polytechnique.edu}
\urladdr{http://www.math.polytechnique.fr/perso/sabbah}

\thanks{The research of TMF was supported by Funda\c c{\~a}o para a Ci{\^e}ncia e Tecnologia, PEst OE/MAT/\allowbreak UI0209/2011. The research of CS was supported by the grant ANR-13-IS01-0001-01 of the Agence nationale de la recherche.}
\subjclass[2010]{14F10, 32C38, 32S40, 32S60, 35Nxx, 58J10}

\keywords{Relative holonomic $\mathcal D$-module, regularity, constructibility, adjoint functor}

\title{Relative Riemann-Hilbert correspondence in~dimension one}

\begin{abstract}
We prove that, in relative dimension one, the functor $\RH^S$ constructed in a previous work (\cite{MFCS2}) as a right quasi-inverse of the solution functor from the bounded derived category of relative $\mathcal D$-modules with regular holonomic cohomology to that of complexes with relative constructible cohomology satisfies the left quasi-inverse property in a generic sense.
\end{abstract}
\maketitle

\tableofcontents
\mainmatter

\section{Introduction}
Let $p:\XS\to S$ be the projection of a product of complex manifolds~$X$ and~$S$ onto the second factor. The notion of holonomic $\DXS$-modules (or~relative holonomic $\shd$-modules for short) was introduced by the second author in \cite{Sa1} and the notion of relative regular holonomic $\shd$-modules was introduced by the authors in \cite[Def.\,2.1]{MFCS2}. They are the objects of, respectively, the abelian categories $\Mod_{\hol}(\DXS)$ and $\Mod_{\rhol}(\DXS)$.
Recall that relative holonomic modules are coherent modules whose characteristic variety, in the product $(T^*X)\times S$, is contained in $\Lambda\times S$ for some Lagrangian conic closed analytic subset $\Lambda$ of $T^*X$. Regular relative holonomic modules are holonomic modules whose restriction as $\pOS$-modules to the fibers of $p$ have regular holonomic $\shd_X$-modules as cohomologies.

In \cite [Defs.\,2.14\,\&\,2.19]{MFCS} we introduced the notion of relative $\R$- and $\C$\nobreakdash-cons\-truc\-tibility for a complex of sheaves of $\pOS$-modules and proved that the essential image of the functor $\Sol$ on the bounded derived category of relative $\shd$\nobreakdash-modules with holonomic cohomology is contained in that of complexes with relative $\C$\nobreakdash-constructible cohomology.

Under the assumption that $d_ S=1$ ($d$ denotes the dimension of a manifold), we constructed in \cite[\S3.4]{MFCS2} the relative tempered cohomology functors $\TH^S$ and $\RH^S$ by adapting Kashiwara's functors $\TH$ and $\RH$ (\cite{Ka3}, where $\RH$ is denoted by $\Psi$) and proved in \hbox{\cite[Th.\,3]{MFCS2}} that $\RH^S$ is a right quasi inverse for the functor $\pSol:=\Sol[d_ X]$ restric\-ted to the derived category of complexes with relative regular holonomic cohomology. To be more precise, in the absolute case ($S$ is point), $\RH^S(\cdot)$ gives $\RH(\cdot)[d_X]$.

However, contrary to the absolute case, the property of being a left quasi inverse \hbox{remains} open. Indeed, the proof of such a property would require some functorial properties for this category such as stability under proper direct image and inverse image. Although stability under proper direct image holds true, the failure of stability by inverse image remains a main obstruction, in contrast with the absolute case as proved by M.\,Kashiwara in \cite{Ka0}.
In Proposition \ref{P:RH1} we prove that this obstacle can be overcome if $d_ X=1$. More precisely, for each relative holonomic module $\shm$ there exists a discrete set $S_0=S_0(\shm)$ such that, out of $S_0$, for each divisor $Y$ in $X$, the induced system of~$\shm$ along $\YS$ has holonomic cohomologies.

We shall say that a property is satisfied \textit{generically on $S$} if it is satisfied on $\XS^*$, where $S^*$ is the complementary of a discrete subset $S_0$ in $S$. The main purpose of this note is to clarify the natural question arising after \cite{MFCS2}: is $\RH^S$ an equivalence of categories when $d_ X=1$? In other words, does $\RH^S$ also provide in that case a left adjoint to $\pSol$?

The answer is that, any $\shm\in\rD^\rb_{\rhol}(\DXS)$ is isomorphic to $\RH^S(\pSol\shm)$ generically on $S$ by an isomorphism $\Theta(\shm)$, where $\Theta(\bullet)$ satisfies Property \ref{pro:PO} below. According to \cite[Prop.\,6.3]{Ka3} in the absolute case, we have $\Sol\circ \RH\circ \Sol=\Sol$ from $\rD^\rb_{\rhol}(\shd_X)$ to $\rD^\rb_{\cc}(\C_X)$ and, according to \hbox{\cite[Proof of Th.\,1.4.8 \& Th.\,6.1.1]{KKIII}}, $\Sol$ is fully faithful, so that we obtain an isomorphism $\Hom_{\shd_X}(M, \RH(\Sol M))\to \Hom(\Sol M, \Sol M)$. We shall call ``Kashiwara's isomorphism'' the isomorphism $M\to\RH(\Sol M)$ in $\rD^\rb_{\rhol}(\shd_X)$ corresponding to the identity in $\Hom(\Sol M, \Sol M)$.

\begin{PO}\label{pro:PO}
For every $\shm\in\rD^\rb_{\rhol}(\shd_X)$, there exists a discrete subset $S_0\subset S$ depending on $\shm$ only (with $S^*:=S\setminus S_0)$, such that
\begin{enumerate}\renewcommand{\theenumi}{\alph{enumi}}
\item\label{pro:POa}
for $s\in S^*$, we have $Li_s^*\Theta(\shm)=\Theta(Li_s^*\shm)$ and $\Theta(Li_s^*\shm)$ coincides with Kashiwara's isomorphism $Li^*_s\shm\to\RH(\Sol Li^*_s\shm)$,

\item\label{pro:POb}
(\textit{Functoriality in a generic sense}) for a morphism $\tau:\shm\to\shn$ in $\rD^\rb_{\rhol}(\DXS)$, we have
$\RH^S(\tau)\Theta(\shm)\!=\!\Theta(\shn)\tau$ on the open subset \hbox{$X\!\times\!(S\!\setminus\!(S_0(\shm)\cup S_0(\shn))$}.
\end{enumerate}
\end{PO}

Our main result is Theorem \ref{TRH1} in which we prove that, if $d_ X=1$, such a morphism $\Theta(\bullet)$ satisfying \ref{pro:PO} exists. It follows from the second part of \eqref{pro:POa}, together with \cite[Prop.\,1.9]{MFCS2} that, for every $\shm\in\rD^\rb_{\rhol}(\shd_X)$, $\Theta(\shm)$ is an isomorphism on~$S^*$. Let us make precise our claim.

According to \cite[(3.17)]{MFCS2}, we have a natural morphism in $\rD^\rb_{\rhol}(\DXS)$
\begin{equation}\label{E RHS6}
\Phi(\shm): \RH^S(\pSol \shm)\to \Rhom_{\pOS}(\Sol \shm, \sho_{\XS})
\end{equation} with the property that, for $s\in S$ and $M:=Li_s^*\shm$, $Li_s^*\Phi(\shm)=\Phi(M)$ coincides with Kashiwara's morphism (\cf the construction of the functor $\RH$ in \cite{Ka3})\begin{equation}\label{E2}
\tho(\Sol M, \sho_X):=\RH(\Sol M)\to \Rhom (\Sol M, \sho_{X}).
\end{equation}

We prove that, given $\shm\in\rD^\rb_{\rhol}(\shd_X)$, there exists a discrete $S_0$ such that, setting $\shm_*:=\shm_{|\XS^*}$, the natural morphism \begin{multline}\label{E:DRISO}
\Hom_{\DXSa}(\shm_*, \RH^S(\pSol\shm_*))\\
\to\Hom_{\DXSa}(\shm_*, \Rhom_{\pOSa}(\pSol\shm_*,\sho_{\XS^*})[d_ X])\\
\simeq \Hom_{\pOSa}(\pSol \shm_*, \pSol \shm_*)
\end{multline} obtained by applying $\Gamma(X\times S^*, \shh^0(\cdot))$ to the morphism associated to the left composition with $\Phi(\shm_*)$: \begin{multline}\label{E:DRISO2}
\Rhom_{\DXSa}(\shm_*, \RH^S(\pSol\shm_*))\\ \to\Rhom_{\DXSa}(\shm_*, \Rhom_{\pOSa}(\pSol\shm_*,\sho_{\XS^*})[d_ X])\\
\simeq \Rhom_{\pOSa}(\pSol \shm_*, \pSol \shm_*)
\end{multline} is an isomorphism, where for the last isomorphism of \eqref{E:DRISO2} we applied the ``associative law'' relating $\Rhom$ and $\otimes$ (see \cite[A.3\,(b)]{Ka2} and \cite[Ex.\,II.24\,(iii)]{KS}).
We then choose for $\Theta(\shm_*)$ the unique morphism corresponding to the identity in $\Hom_{\pOSa}(\pSol \shm_*, \pSol \shm_*)$. Therefore, in the absolute case ($S$ equal to a point, $\shm$ has regular holonomic cohomology over $\shd_X$), we recover the construction of Kashiwara's isomorphism $\shm\simeq \RH(\Sol\shm)$.

The proof that \eqref{E:DRISO2} is an isomorphism is a consequence of Proposition~\ref{P:reg} below which states that, for any $\shm\in\rD^\rb_{\rhol}(\DXS)$ and any $F\in\rD^\rb_\rc(\pOS)$, there exists a discrete $S_0\subset S$ depending on $\shm$ only such that the natural morphism \begin{multline}\label{E:Theta2}
\Rhom_{\DXSa}(\shm_*, \RH^S(F)_*[-d_X])\\
\to \Rhom_{\DXSa}(\shm_*, \Rhom_{\pOS}(F,\sho_{\XS})_*)
\end{multline}
is an isomorphism, which in turn is a consequence of Proposition \ref{P:RH1} together a comparison result (Lemma \ref{L:elem}). Another consequence of Proposition \ref{P:reg} is the full faithfulness of $\Sol$ in a generic sense (Lemma~\ref{L:ff}).

Throughout this work we assume that $d_X=d_ S=1$.

\subsubsection*{Acknowledgements}
We thank the referees for their valuable comments which helped us to improve the article.

\section{Main results and proofs}

We shall systematically make use of the notation and results in \cite{MFCS} and~\cite{MFCS2}.

\begin{lemma}\label{L:elem}
Let us assume that $X=\C=S$ and let $(x,s)$ be the variables on $\XS$. Let $\shm=\shb_{\{0\}\times S/S}:=\DXS/\DXS x$. Then, setting $S_0=\emptyset$, \eqref{E:Theta2} is an isomorphism for any $F\in\rD^\rb_\rc(\pOS)$.
\end{lemma}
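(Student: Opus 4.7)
The plan is to use the free presentation $0 \to \DXS \xrightarrow{\cdot x} \DXS \to \shm \to 0$. This yields, for any complex $\shn$ of $\DXS$-modules, a quasi-isomorphism $\Rhom_{\DXS}(\shm, \shn) \simeq [\shn \xrightarrow{x\cdot} \shn]$ concentrated in degrees $0$ and $1$. Writing $\alpha \colon \RH^S(F)[-1] \to \Rhom_{\pOS}(F, \sho_{\XS})$ for the natural morphism underlying \eqref{E:Theta2}, this reduces the lemma to showing that the cofiber $C := \operatorname{cofib}(\alpha)$ satisfies $\Rhom_{\DXS}(\shm, C) \simeq [C \xrightarrow{x\cdot} C] \simeq 0$, \ie multiplication by $x$ is a quasi-isomorphism on $C$.

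First, I would simplify the right-hand side of \eqref{E:Theta2} by tensor-hom associativity: $\Rhom_{\DXS}(\shm, \Rhom_{\pOS}(F, \sho_{\XS})) \simeq \Rhom_{\pOS}(F, \Sol \shm)$. Since $x\cdot$ is injective on $\sho_{\XS}$ with cokernel $i_*\sho_S$ for $i \colon \{0\} \times S \hookrightarrow \XS$, we have $\Sol \shm \simeq i_*\sho_S[-1]$, so the RHS equals $\Rhom_{\pOS}(F, i_*\sho_S)[-1]$, essentially the ``$\{x=0\}$-restriction'' of the $F$-pairing. The geometric content of the lemma is then transparent: tempered and ordinary holomorphic solutions paired with $F$ agree after transverse restriction to the slice, because temperedness concerns growth near boundaries of open subsets of $\XS$, and no such boundary lies along $\{x=0\}$ in the transverse direction.

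To establish that $x\cdot$ is a quasi-iso on $C$, I would proceed by d\'evissage on $F$. In the model case $F = j_!\CC_U$ for $j \colon U = \XS \setminus (\{0\} \times S) \hookrightarrow \XS$, the stalk of $C$ at a point of $\{x=0\}$ is the module of germs of holomorphic functions on $U$ (for instance $e^{1/x}$) modulo tempered, \ie meromorphic, germs; multiplication by $x$ is manifestly bijective on such stalks (injective because $xf$ meromorphic forces $f$ meromorphic, surjective because $g/x \in \sho_U$ for every $g \in \sho_U$). If $F$ is supported away from $\{x=0\}$, then $C$ vanishes on a neighbourhood of $\{x=0\}$, so $x\cdot$ is automatically invertible there. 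General $F \in \rD^\rb_\rc(\pOS)$ then follows by distinguished triangles.

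The main obstacle I anticipate is converting the above heuristic into a clean derived-categorical argument for arbitrary relative constructible $F$: one must control $C$ uniformly and combine the two elementary cases through triangles without losing the $x\cdot$-invertibility property. In practice this should amount to invoking the explicit construction of $\RH^S$ via relative tempered distributions from \cite{MFCS2} and verifying, in the associated Dolbeault-type resolutions, that multiplication by $x$ acts invertibly on the non-tempered quotient.
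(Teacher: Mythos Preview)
Your core idea coincides with the paper's: using the free resolution $0\to\DXS\xrightarrow{\cdot x}\DXS\to\shm\to0$, the statement reduces to showing that multiplication by $x$ is invertible on the cofiber $C$ of the comparison map. The paper phrases this as ``change of generator $u\mapsto x^{-1}u$'' and checks that $x^{-1}$ preserves both the full and the tempered distribution spaces on $U\times S$ once $U\subset X\setminus\{0\}$, which is exactly your stalk computation in the model case.

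Where your proposal is genuinely incomplete is the d\'evissage. Your two building blocks, namely the single sheaf $j_!(\pOS|_U)$ with $U=\XS\setminus(\{0\}\times S)$ and sheaves supported away from $\{x=0\}$, do not generate $\rD^\rb_\rc(\pOS)$ under triangles: already $F=\pOS$ sits in a triangle with $j_!(\pOS|_U)$ and $i_*(\pOS|_{\{0\}\times S})$, and the latter is supported \emph{on} $\{0\}\times S$, not away from it. The paper avoids this by invoking \cite[Prop.\,3.5]{MFCS2}, which reduces the verification to $F=\C_{U\times S}\otimes\pOS$ for an arbitrary relatively compact subanalytic open $U\subset X$ (not just your fixed complement). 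One then works at the stalk at $(0,s_0)$ and splits into cases according to the position of $0$ relative to $\partial U$: if $0\notin\partial U$ the quotient $\Gamma_{\C_{U\times S}}(\Db_{\XS})/\tho(\C_{U\times S},\Db_{\XS})$ vanishes at $(0,s_0)$, and if $0\in\partial U$ one may assume $U\subset X\setminus\{0\}$ and run your $x^{-1}$ argument. This is the clean replacement for your proposed triangle argument; you correctly anticipated that the explicit \cite{MFCS2} construction of $\RH^S$ via tempered distributions would be needed, and Proposition~3.5 there is precisely the ingredient.
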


\begin{proof}
Let $Y=\{0\}\subset X$. According to \cite[Prop.\,3.5]{MFCS2}, it is sufficient to consider $F=\C_{U\times S}\otimes \pOS$, for some relatively compact open subanalytic set
$U\subset X$. Thus our goal is to prove
\begin{equation}\label{E:Comp}
\Rhom_{\DXS}(\shm, \Gamma_{\C_{U\times S}}(\Db_{\XS})/\tho(\C_{U\times S}, \Db_{\XS}))\simeq0
\end{equation}
It is sufficient to check \eqref{E:Comp}
for the stalk at any $(x_0, s_0)\in \XS$. If $x_0\neq 0$, the result is trivial since $\supp \shm=\{0\}\times S.$ So we are led to assume $(0, s_0)\in \partial U\times\nobreak S$, since the quotient $\Gamma_{\C_{U\times S}}(\Db_{\XS})/\tho(\C_{U\times S}, \Db_{\XS})_{(0,s)}$ vanishes if $0\notin\partial U$ and the result is again trivial.

Therefore we may assume that $U$ is contained in $X\setminus \{0\}$. We are then allowed to perform a change of generator $u\mapsto x^{-1}u$ since tempered distributions on $U\times S$ are stable by multiplication by $x^{-1}$ and the result follows.
\end{proof}

\begin{proposition}\label{P:RH1}
For any $\shm\in\Mod_{\hol}(\DXS)$ there exists a discrete subset~$S_0$ in $S$ such that, for any reduced divisor $Y$ of~$X$, setting $i_Y:Y\hto X$ the inclusion,
\begin{enumerate}
\item\label{P:RH11}
$\Di_{Y*}\,\Di_Y^*(\shm_*)$ has holonomic cohomologies as an object of the category $\rD^\rb(\DXSa|_{\YS^*})$.
\item\label{P:RH12}
$R\Gamma_{[(X\setminus Y)\times S^*]}(\shm_*)$ is concentrated in degree zero.
\item\label{P:RH13}
$\shm_*(\ast(\YS^*))\simeq\shh^0R\Gamma_{[(X\setminus Y)\times S^*]}(\shm_*)$ is holonomic.
\item\label{P:RH14}
$R\Gamma_{[\YS^*]}(\shm_*)$ has holonomic cohomologies.
\item\label{P:RH15}
If $\shm$ is regular holonomic,
$R\Gamma_{[\YS^*]}(\shm_*)$ and $\shm_*(\ast(\YS^*))$
have regular holonomic cohomologies.
\end{enumerate}
\end{proposition}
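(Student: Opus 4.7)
Since $X$ has dimension one, a reduced divisor $Y\subset X$ is locally a single point, and all assertions are local on~$X$. I would therefore reduce to the case $Y=\{x_0\}$ with a local coordinate~$x$ vanishing at $x_0$. With this normalisation, $\Di_Y^*\shm$ is computed by the complex $[\shm\xrightarrow{x}\shm]$ placed in degrees $[-1,0]$, while $R\Gamma_{[\YS]}\shm$ and $\shm(\ast\YS)$ fit into the standard localisation distinguished triangle with~$\shm$. Assertions (1)--(5) thus reduce to $\sho_{S^*}$-coherence and holonomicity statements for the kernel and cokernel of multiplication by~$x$, for the localisation $\shm[x^{-1}]$, and for its derived variant.

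The key structural input is that, under $d_X=d_S=1$, the characteristic variety satisfies $\Char\shm\subset(T^*_XX\cup\bigcup_iT^*_{x_i}X)\times S$, so $\shm$ is $\sho_{\XS}$-coherent away from the discrete singular set $\Sigma=\{x_i\}\subset X$. If $x_0\notin\Sigma$ there is nothing to prove: $\Di_Y^*\shm$ is concentrated in degree~$0$ and coincides with the $\sho_S$-coherent fibre restriction, while $\shm[x^{-1}]$ is a meromorphic connection. For $x_0\in\Sigma$, I would invoke the relative $V$-filtration of $\shm$ at $\{x=0\}$ following \cite{Sa1}: a coherent $V$-filtration $V_\bullet\shm$ exists, and its graded pieces $\mathrm{gr}^V_k\shm$ become $\sho_S$-coherent outside a discrete subset of~$S$ arising from the zeros of the relative Bernstein--Sato polynomial and from the finitely many degeneracy loci of the successive quotients $V_k/V_{k-1}$. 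I would define $S_0$ as the union of these discrete sets over the locally finite set~$\Sigma$; this definition depends only on~$\shm$ and is valid uniformly over all divisors~$Y$.

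With $S_0$ so defined, assertion (1) follows because the $V$-filtration identifies $\ker(x\colon\shm_*\to\shm_*)$ and $\shm_*/x\shm_*$ with $\sho_{S^*}$-coherent subquotients of $\mathrm{gr}^V\shm_*$; Kashiwara's equivalence for coherent modules supported on $\YS^*$ then yields holonomicity as a $\DXSa|_{\YS^*}$-module. Assertion (2) expresses the generic vanishing of higher local cohomology and follows from the $V$-filtration, since $x$ acts invertibly on sufficiently deep graded pieces on~$S^*$. Assertion (3) is the holonomicity of $\shm_*(\ast\YS^*)$, whose characteristic variety is read off from that of $\shm_*$ and still has the form $\Lambda'\times S^*$. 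Assertion (4) follows from (3) via the triangle $R\Gamma_{[\YS^*]}\shm_*\to\shm_*\to\shm_*(\ast\YS^*)\to+1$. For (5), enlarging $S_0$ so that $Li_s^*$ commutes with the preceding constructions for $s\in S^*$, regularity of~$\shm$ yields regularity of each $Li_s^*\shm$, and preservation of regularity under localisation and local cohomology in the absolute case \cite{Ka3} transports fibrewise to the relative statement on~$S^*$.

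The main obstacle I anticipate is establishing the relative $V$-filtration with the precise conclusion that its singular locus in~$S$ is \emph{discrete} rather than merely closed analytic and proper. Here the hypothesis $d_S=1$ is decisive: any proper closed analytic subset of the one-dimensional~$S$ is automatically discrete, so the finitely many coherence-failure loci relevant to a given $\shm$ aggregate into a single discrete $S_0\subset S$. Combined with the discrete nature of $\Sigma\subset X$ and the local character of the argument on~$X$, this yields the required uniform discrete $S_0$ depending only on~$\shm$.
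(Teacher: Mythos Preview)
Your overall architecture---localise to $Y=\{x_0\}$, control $\ker x$, $\mathrm{coker}\,x$ and $\shm[x^{-1}]$, chain (1)--(4) through the localisation triangle, and handle (5) by commuting $Li^*_s$ with local cohomology to reduce to the absolute case---is sound, and your treatment of (5) matches the paper's. The gap is at the core step. You assert that the graded pieces $\mathrm{gr}^V_k\shm$ become $\sho_S$-coherent off the zeros of ``the relative Bernstein--Sato polynomial,'' citing \cite{Sa1}; but in \cite{Sa1} the existence of such a polynomial is not a theorem for arbitrary relative holonomic modules---it is the hypothesis of (strict) \emph{specializability}, imposed on the twistor $\shd$-modules treated there. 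Nothing in the condition $\Char\shm\subset\Lambda\times S$ alone guarantees it, and the example immediately following the proposition (the module $\DXS/\DXS(x^2\partial_x+g(s))$) shows that without excising a discrete $S_0$ the $V$-graded pieces, and hence $\Di_Y^*\shm$, can genuinely fail to be coherent. The same gap recurs in your (3): coherence of $\shm_*(\ast\YS^*)$ over $\DXSa$ is not automatic and again needs a $b$-function input. So the step you record as a citation is exactly what must be proved.

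The paper fills this gap by an elementary argument specific to $d_X=1$, bypassing the $V$-filtration formalism. From the characteristic-variety constraint one extracts, for a local cyclic generator $u$, a relation $(x\partial_x)^Mu=\sum_{j<M}a_j(x,s)\partial_x^ju$, and after clearing powers of $x$ rewrites it as $Pu=0$ with $P=P_0+xQ$ and $P_0=\sum_{k\le M}a''_k(s)\,x^k\partial_x^k$ of $V$-order zero. One then sets $S_0$ equal to the zero locus of the top nonvanishing coefficient $a''_{N_0}(s)$; on $S^*$ the operator is genuinely Fuchsian, and the relative version of the Laurent--Schapira ellipticity theorem \cite[Th.\,3.3]{LS} yields (1) directly. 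This is, in effect, the construction of a non-monic relative $b$-equation that your argument is missing. The paper then deduces (4) from (1) via the relative analogues of \cite[Prop.\,4.3]{Ka0} and \cite[Prop.\,7.2.1]{M}, and obtains (2), (3) from the mapping-cone triangle.
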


\begin{proof}
Let us prove \ref{P:RH1}\eqref{P:RH11}. The question is local on $\XS$, so we can assume that $\shm$ is finitely generated and, by induction on the number of local generators, we may assume that $\shm$ is an holonomic $\DXS$-module with a single generator. Taking coordinates $x$ on $X$ and $s$ on $S$, we are reduced to assuming that $Y=\{x=0\}$ and that $\Char(\shm)\subset(T^*_X X\cup T^*_{\{0\}} X)\times S$. Therefore, there exists a relation
\begin{equation}\label{E:op}
(x\partial_x)^M u=\sum_{j\leq M-1}a_{j}(x,s)\partial_x^ju
\end{equation}
for some non-negative integer $M$ and some holomorphic functions $a_j$ on $\XS$. We~can write $a_j=x^{\ell_j}a'_j$, with $a'_j(0, s)\not\equiv 0$ and $\ell_j\in\N$, and we set $M_0=\max\{0,(j-\ell_j)_{j=0,\dots,M-1}\}$.

Hence, after multiplying by $x^{M_0}$, \eqref{E:op} reads $Pu=0$ for an operator $P=P_0+xQ\in\DXS$ such that $Q$ is of order zero with respect to the $V$\nobreakdash-filtration and
\begin{equation}\label{E:op2}
P_0(s,x,\partial_x)=\sum_{k\leq M}a''_k(s)x^k\partial_x^k
\end{equation}
for some holomorphic functions $a''_k$ on $S$ not all vanishing identically, and it is enough to treat the case of the $\DXS$-module $\DXS/\DXS\cdot P$. Let~$N_0$ be the biggest $k$ such that $a''_k$ does not vanish identically on $S$ (note that~$N_0$ can be~$0$).
Let $S_0$ be the (discrete) zero set of $a''_{N_0}$. Then we are in conditions to apply the relative version of \cite[Th.\,3.3]{LS} to conclude that~$\shm_*$, being elliptic along $\YS^*$, satisfies \ref{P:RH1}\eqref{P:RH11}.

According to the relative versions of Proposition 4.3 in \cite{Ka0} and of
Proposition 7.2.1 of \cite{M}, \ref{P:RH1}\eqref{P:RH11} is equivalent to \ref{P:RH1}\eqref{P:RH14}. On the other hand, $R\Gamma_{[(X\setminus Y)\times S^*]}(\shm_*)$ is concentrated in degree zero since it is the localized module of $\shm_*$ along a divisor. Since $R\Gamma_{[(X\setminus Y)\times S^*]}(\shm_*)$ is the mapping cone of the natural morphism $$R\Gamma_{[\YS^*]}(\shm_*)\to \shm_*$$ we conclude \ref{P:RH1}\eqref{P:RH12} and \ref{P:RH1}\eqref{P:RH13}.

Let us now prove \ref{P:RH1}\eqref{P:RH15}. Let $S_0$ be given by \ref{P:RH1}\eqref{P:RH11}. By \ref{P:RH1}\eqref{P:RH13} and \ref{P:RH1}\eqref{P:RH14}, we have a distinguished triangle in $\rD^\rb_{\hol}(\DXSa)$
$$R\Gamma_{[\YS^*]}(\shm_*)\to \shm_*\to R\Gamma_{[(X\setminus Y)\times S^*]}(\shm_*)\underset{+1}{\to}.$$
Assume that $\shm$ is regular and let $s\in S^*$ be arbitrary. Let us consider the distinguished triangle
\[\tag{$*$}
Li^*_sR\Gamma_{[\YS^*]}(\shm_*)\to Li^*_s\shm_*\to Li^*_sR\Gamma_{[(X\setminus Y)\times S^*]}(\shm_*)\underset{+1}{\to}
\]
The assumption on $\shm$ means that $Li^*_s\shm$ has $\shd_X$-regular holonomic cohomologies. Since $Li^*_s$ commutes with $\Rhom$, we have,
for each $k\in \N $, identifying $X$ to $X\times \{s\}$, a functorial isomorphism in $\rD^\rb(\shd_X)$
$$Li^*_s\Rhom_{\sho_{\XS}}(\sho_{\XS}/x^k\sho_{\XS}, \shm)\simeq \Rhom_{\sho_{X}}(\sho_{X}/x^k\sho_{X}, Li^*_s\shm)$$
and, since $\otimes$ commmutes with $\varinjlim$, we conclude a functorial isomorphism in $\rD^\rb(\shd_X)$ $$Li^*_sR\Gamma_{[\YS]}(\shm)\simeq R\Gamma_{[Y]}(Li^*_s\shm),$$ where the right hand term has regular holonomic cohomologies according to \cite[Th.\,5.4.1]{KKIII}. Therefore $R\Gamma_{[\YS^*]}(\shm_*)$ has regular holonomic cohomologies and the result follows according to the distinguished triangle~$(\ast)$.
\end{proof}

\begin{remark}
Our method in the preceding proof does not extend to the case $d_X>1$ because we do not have in general the analog of \eqref{E:op} and \eqref{E:op2}.
\end{remark}

The following example shows that we cannot avoid the existence of a non\-empty~$S_0$ in Proposition \ref{P:RH1}.

\begin{example}
Let $X=S=\C$, let $Y=\{0\}$ and let $\shm$ be defined by the operator $P(x,s,\partial_x)=x^2\partial _x+g(s)$, where $g$ is a non constant holomorphic function. Then \hbox{$S_0=\{s\in\C\mid g(s)=0\}$}. Let us check that $\shh^{-1}\Di_Y^*(\shm)=0$ and that $\shh^0\Di_Y^*(\shm)$ is not coherent in any neighbourhood of each $(0,s_0)$ such that $g(s_0)=0$. A~local section of the right module $\shb_{\{0\}\times S/S}^{(r)}:=\DXS/x\DXS$ has the form $\sum_{j\leq m}a_j(s)\delta^j(x)$ for some functions $a_{j}(s)$ holomorphic in a neighbourhood of $s_0$, $\delta^j(x)$ denoting the class of $\partial_x^j$ in $\shb_{\{0\}\times S/S}^{(r)}$. That is, in the neighbourhood of any point $(0, s_0)$, $\shb_{\{0\}\times S/S}^{(r)}$ is $\sho_S$-isomorphic to the sheaf $\sho_S[\delta(x)]$, filtered by the degree in $\delta$. We denote by $(\shb_{\{0\}\times S/S}^{(r)})_m$ the $\sho_S$-sub-module of polynomials of degree $\leq m$. The (right) action of $P$ on $\shb_{\{0\}\times S/S}^{(r)}$ is described by
$$\sum_{j\leq m}a_j(s)\delta^j(x)\mto \sum_{j\leq m}((j+1)ja_{j+1}(s)+ a_j(s)g(s))\delta^j(x)$$ In particular $P$ defines a filtered morphism, \ie $$(\shb_{\{0\}\times S/S}^{(r)})_mP\subset(\shb_{\{0\}\times S/S}^{(r)})_m.$$
Let us compute $\ker P=\shh^{-1}\Di_Y^*(\shm)$. Consider a section $u$ of the above form satisfying $uP=0$.
Since by assumption $g$ is non constant and $a_{m+1}=0$, we must have that $a_m=0$ and so henceforward, concluding the vanishing of $\ker P$.

Suppose now that $\sum_{j\leq m} b_j(s)\delta^j(x)=\sum_{j\leq l}a_j(s)\delta^j(x) P,$ with $b_{m}(s)\neq 0.$
Since $b_{m+1}=0$ we have $$(k+1)ka_{k+1}+a_kg=0,\quad \forall k\geq m+1$$ On the other hand we have $a_{k+\ell}=0$ for all $\ell\gg0$, hence, recursively, we conclude that $a_k=0$ for any $k\geq m+1$. Thus $b_m=a_mg$. By descending induction applied to $b_k=(k+1)ka_{k+1}+a_kg$ with $a_{k+1}$ given such that $g$ divides $a_{k+1}$, we find $a_k=b_{k}-(k+1)ka_{k+1}/g$. In particular the condition $uP\in(\shb_{\{0\}\times S/S}^{(r)})_m$ implies that $u\in(\shb_{\{0\}\times S/S}^{(r)})_m$. We conclude that
\[
\mathrm{Coker}P=\varinjlim_m\mathrm{Coker}(P|_{(\shb_{\{0\}\times S/S}^{(r)})_m})\simeq\varinjlim_m (\sho_S/\sho_Sg)^{\oplus m}.
\]
If $g(s_0)\neq 0$, then $g$ is a unit in a neighbourhood of $s_0$, hence the sequence $(\sho_S/\sho_Sg)^{\oplus m}$ is locally zero, which entails that $\shh^0\Di_Y^*(\shm)$ is zero hence coherent in a neighbourhood of $s_0$. If $g(s_0)=0$, the above mentioned sequence is not locally stationary hence
$\shh^0\Di_Y^*(\shm)$ is not coherent in any neighbourhood of $s_0$.
\end{example}

\begin{proposition}\label{P:reg}
Given $\shm\in \rD^\rb_{\rhol}(\DXS)$, there exists a discrete subset~$S_0$ of~$S$ such that \eqref{E:Theta2} is an isomorphism on $\XS^*$ for any $F\in\rD^\rb_{\cc}(\pOS).$
\end{proposition}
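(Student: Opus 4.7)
Let $C(F)$ denote the cone of the natural morphism $\RH^S(F)[-d_X]\to \Rhom_{\pOS}(F,\sho_{\XS})$. The claim that \eqref{E:Theta2} is an isomorphism on $\XS^*$ is equivalent to the vanishing $\Rhom_{\DXSa}(\shm_*, C(F)_*)=0$, which is what we aim to establish. The strategy is a double dévissage: first on $F$, using the generators of $\rD^\rb_\cc(\pOS)$ supplied by \cite[Prop.\,3.5]{MFCS2}; then on $\shm$, using the distinguished triangle produced by Proposition \ref{P:RH1}.

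Since $F\mapsto \Rhom_{\DXSa}(\shm_*, C(F)_*)$ is triangulated in $F$, \cite[Prop.\,3.5]{MFCS2} reduces us to $F = \C_{U\times S}\otimes \pOS$ for $U \subset X$ a relatively compact subanalytic open set. For such an $F$, the cone $C(F)$ is, via the de Rham formalism of \cite{MFCS2}, controlled by the quotient $\Gamma_{\C_{U\times S}}(\Db_{\XS})/\tho(\C_{U\times S}, \Db_{\XS})$ that already appears in \eqref{E:Comp}.

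Let $S_0$ be the discrete subset furnished by Proposition \ref{P:RH1} applied to $\shm$, and choose a reduced divisor $Y\subset X$ containing the singular support of $\shm$ in the $X$-direction (a discrete set of $X$ since $d_X=1$). Proposition \ref{P:RH1}\eqref{P:RH15} provides a distinguished triangle in $\rD^\rb_{\rhol}(\DXSa)$
\[
R\Gamma_{[\YS^*]}(\shm_*)\to \shm_*\to \shm_*(*\YS^*)\underset{+1}{\to},
\]
both extremities being regular holonomic; it suffices to prove the vanishing at each extremity. For the pointwise-supported extremity, the relative version of Kashiwara's equivalence along the discrete inclusion $Y\hto X$ represents $R\Gamma_{[\YS^*]}(\shm_*)$ as an iterated extension of modules isomorphic to $\shb_{\{y\}\times S/S}|_{\XS^*}$ for $y\in Y$ (possibly after enlarging $S_0$ to make the corresponding $\sho_{S^*}$-coefficients free), and the desired vanishing is then precisely the content of Lemma \ref{L:elem}. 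For the meromorphic extremity $\shm_*(*\YS^*)$: on the open set $(X\setminus Y)\times S^*$ it is $\sho$-coherent with an integrable relative connection, so the assertion reduces, via local trivialisation, to an absolute de Rham computation where $\tho(\C_{U\times S},\Db_{\XS})\to\Gamma_{\C_{U\times S}}(\Db_{\XS})$ is already an isomorphism off $\partial U$; near $\YS^*$, stability of $\shm_*(*\YS^*)$ under multiplication by $x^{-1}$ enables the same change-of-generator argument that closes the proof of Lemma \ref{L:elem}, again reducing the vanishing to \eqref{E:Comp}.

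The main obstacle is the meromorphic extremity, where the smooth-part de Rham computation and the behaviour near $\YS^*$ must be meshed together. It is precisely here that the regularity of $\shm$, preserved on $\shm_*(*\YS^*)$ by Proposition \ref{P:RH1}\eqref{P:RH15}, together with the generic choice of $S_0$, intervenes: it guarantees that the fiberwise restrictions $Li_s^*\shm_*(*Y)$ are regular holonomic on $X$ for every $s\in S^*$, so that Kashiwara's absolute comparison theorem applies fiberwise and propagates to the vanishing of $\Rhom_{\DXSa}(\shm_*, C(F)_*)$ on $\XS^*$.
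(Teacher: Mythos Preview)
Your dévissage skeleton---reduce $F$ via \cite[Prop.\,3.5]{MFCS2}, then split $\shm_*$ by the triangle of Proposition \ref{P:RH1} and feed the supported part into Lemma \ref{L:elem}---is exactly the paper's Steps~2 and~3. The genuine gap is your treatment of the meromorphic extremity $\shm_*(*\YS^*)$.

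Neither of your two arguments there goes through. The change-of-generator trick from Lemma \ref{L:elem} is specific to the cyclic module $\shb_{\{0\}\times S/S}$: what makes it work is that multiplication by $x^{-1}$ is an \emph{automorphism} of $\shb_{\{0\}\times S/S}$, not merely that the module is $x^{-1}$-stable. For a general localized module this is false, and there is no single generator to shift. Your fallback---``Kashiwara's absolute comparison theorem applies fiberwise and propagates''---is also unjustified: the cone $C(F)$ is not $\DXS$-coherent (it lives in the world of distributions modulo tempered distributions), so $\Rhom_{\DXSa}(\shm_*, C(F)_*)$ has no reason to be $\pOS$-coherent, and the testing criterion of \cite[Prop.\,1.9]{MFCS2} does not apply to conclude vanishing from fiberwise vanishing.

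The paper closes this gap with an extra layer of dévissage you omitted: first split $\shm$ into its $\pOS$-torsion part $\shm_\mathrm{t}$ and its strict quotient $\shm_\mathrm{tf}$. For $\shm_\mathrm{t}$ the support projects to a discrete subset of $S$, and an honest reduction to the absolute case via $Li_s^*$ (your fiberwise idea, but now legitimate because $\Rhom_{\DXS}(\shm_\mathrm{t},\bullet)$ factors through $Li_s^*$) gives the result with $S_0=\emptyset$. For the strict part, strictness is exactly what guarantees that $\shm_*(*\YS^*)$ is of D-type along $\YS^*$ in the sense of \cite[Def.\,2.10]{MFCS2}, and then \cite[Lem.\,4.2]{MFCS2} is the black box that delivers the comparison. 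Without the torsion/strict split you cannot invoke that lemma, and without that lemma your meromorphic step has no proof.
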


\begin{proof}
Our aim is to apply \cite[Lem.\,4.2]{MFCS2} to $\shm(*(\YS^*))$ for a suitable~$S^*$ when $\shm(*(\YS^*))$ is of D-type, in particular when it is strict. We embed~$\shm$ in an exact sequence of regular holonomic modules:
$$0\to \shm_\mathrm{t}\to \shm\to \shm_\mathrm{tf}\to 0$$ where $\shm_\mathrm{t}$ is the submodule of $\sho_S$-torsion germs and $\shm_\mathrm{tf}$ is a strict, \ie an $\sho_S$-flat, module.
\subsubsection*{Step 1}We assume first that $\shm\simeq \shm_\mathrm{t}$. In that case, we claim that we can take for $S_0$ the empty set.

Since $d_ S=1$, the projection of $\supp \shm$ on $S$ is discrete. Given $(x_0,s_0)\in \supp \shm$, we may assume that $\shm$ admits a single generator in a neighbourhood of $(x_0, s_0)$.

Let $(x,s)$ denote a system of local coordinates, $x$ in $X$ and $s$ in $S$, such that $s_0=0\in\C$. We can choose $N\in\N$ such that $s^N\shm=0$ and an easy argument of induction on $N$ allows us to assume $N=1$.

We may then write $\shm$ as a quotient
$$\shm=\DXS/(\DXS\shj+\DXS s),$$
where $\shj$ is a coherent ideal of $\shd_X$ ($X$ identified to $X\times \{0\}$), and the assumption of regularity entails that $\shl:=\shd_X/\shj$ is a regular holonomic $\shd_X$-module.

Moreover, in this local system of coordinates, $\shd_X$ embeds in $\DXS$ as the subsheaf of operators not depending on $s$, so that $\DXS$ is flat over~$\shd_X$ and we have
$\shm':=\DXS/\DXS\shj$ is strict, $\shm'\simeq \DXS\otimes _{\shd_X}\shl$, and $$\shm\simeq p^{-1}(\sho_S/s\sho_S)\otimes_{\pOS}\shm'.$$

According to the ``associative laws'' (\cite[App.\,3, (A.10)]{Ka2}) we get a chain of isomorphisms of functors
\begin{align*}
\Rhom&_{\DXS}(\shm,\bullet)\\
&\simeq \Rhom_{\DXS}\bigl(\DXS\otimes_{\shd_X}\shl, \Rhom_{\pOS}(p^{-1}(\sho_S/s\sho_S)),\bullet\bigl)\\
&\simeq
\Rhom_{\shd_X}\bigl(\shl, \bD'(p^{-1}(\sho_S/s\sho_S))\overset{L}{\otimes} _{\pOS}\bullet\bigl)\\
&\simeq \Rhom_{\shd_X}\bigl(\shl, p^{-1}(\sho_S/s\sho_S)[-1]\overset{L}{\otimes} _{\pOS}\bullet\bigl)\\
&\simeq \Rhom_{\shd_X}\bigl(\shl, Li^*_s(\bullet)[-1]\bigl)
\end{align*}

According to \cite[Prop.\,2.1]{MFCS},
\[
Li^*_s(\Rhom(F,\sho_{\XS}))\simeq \Rhom(Li^*_s F, \sho_X)
\]
and according to \cite[Prop.\,3.26]{MFCS2},
\[
Li^*_s(\RH^S(F))[-1]\simeq \tho(Li^*_s F, \sho_X),
\]
hence the statement follows by \cite[Th.\,6.1.1]{KKIII} with $S_0=\emptyset$.

\subsubsection*{Step 2}
Let us now consider the case where $\shm$ is supported by $\YS$, where~$Y$ is a reduced divisor of $X$. We claim again that the statement holds true with $S_0=\nobreak\emptyset$. By \cite[Th.\,1.5]{MFCS2}, we have $\shm\simeq\Gamma_{[\YS]}(\shm)\simeq \Di_*\shn$, where~$i$ denotes the inclusion $Y\subset X$ and $\shn$ is a direct sum of terms of the form $\shb_{\{y\}\times S/S}\otimes_{\pOS}p^{-1}G_y$, for some $y\in Y$ and some $G_y\in \Mod_{\coh}(\sho_S)$. Hence we may assume that $Y=\{0\}\subset\C$. Taking a local coordinate $x$ on $\C$ vanishing on~$Y$, we are reduced to proving that~\eqref{E:Theta2} applied to
\[
\shm=\shb_{\{0\}\times S/S}\otimes_{\pOS}(\C_{\{0\}\times S}\otimes p^{-1}G)
\]
is an isomorphism when $G$ is a coherent $\sho_S$-module. By the ``associative laws'' above mentioned
this amounts to checking the same property for the regular holonomic module $\shb_{\{0\}\times S/S}$ which in turn follows by Lemma \ref{L:elem}.

\subsubsection*{Step 3} Let us now assume that $\shm$ is strict. If $\shm$ is a locally free $\sho_{\XS}$-module, the assertion follows from \cite[Lem.\,3.17]{MFCS2}. Otherwise the natural stratification associated to $\shm$ is $\{X\setminus Y, Y\}$ for some reduced divisor $Y$ in~$X$. Let $S_0$ be determined by Proposition \ref{P:RH1} and let us embed $\shm_*$ in an exact sequence where, according to Proposition \ref{P:RH1}, $\shm_*(\ast(\YS^*))$ and $\shh^1_{[\YS^*]}(\shm_*)$ are regular holonomic modules:
\[
\tag{$\ast$}
0\ra\Gamma_{[\YS^*]}(\shm_*)\to\shm_*\\
\to \shm_*(\ast(\YS^*))\to \shh^1_{[\YS^*]}(\shm_*)\ra 0.
\]
Since the functor of localization is exact, strictness is preserved by localization, hence $\shm_*(\ast(Y\!\times\!S^*))$ is of D-type along $\YS^*$ in the sense of \cite[Def.\,2.10] {MFCS2}. Therefore \cite[Lem.\,4.2]{MFCS2} gives the statement for $\shm_*(\ast(\YS^*))$.

We apply our previous results to the following exact sequences of regular holonomic $\DXS$-modules obtained by splitting $(\ast)$:\enlargethispage{1.5\baselineskip}%
\begin{gather}\label{E:loc1}
0\to\Gamma_{[\YS^*]}(\shm_*)\to\shm_*\to \sfrac{\shm_*}{\Gamma_{[\YS^*]}\shm_*}\to 0\\[3pt]
\label{E:loc2}
0\to \sfrac{\shm_*}{\Gamma_{[\YS^*]}\shm_*}\to\shm_*(\ast(\YS^*))\to \shh^1R\Gamma_{[\YS^*]}\shm_*\to 0.
\end{gather}
By Step 2 applied to \eqref{E:loc2} we conclude that the statement holds true for $\sfrac{\shm_*}{\Gamma_{[\YS^*]}\shm_*}$. From \eqref{E:loc1} and Step 2 we conclude that the statement holds for~$\shm_*$.
\end{proof}

\begin{theorem}\label{TRH1}
For each $\shm\in\rD^\rb_{\rhol}(\DXS)$ there exists a discrete set $S_0\subset S$ and an isomorphism in $\rD^\rb_{\rhol}(\DXSa)$ satisfying \ref{pro:PO}:
$$\Theta(\shm_*):\shm_*\to\RH^S(\pSol\shm)_*.$$
\end{theorem}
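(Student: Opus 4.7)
The plan is to construct $\Theta(\shm_*)$ as the unique morphism corresponding to $\mathrm{id}_{\pSol\shm_*}$ under the chain of isomorphisms \eqref{E:DRISO}, whose validity is supplied by Proposition \ref{P:reg}, and then to verify Property \ref{pro:PO} by reducing the verification to Kashiwara's absolute construction via base change along $Li^*_s$.

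More precisely, given $\shm\in\rD^\rb_{\rhol}(\DXS)$, the complex $\pSol\shm$ belongs to $\rD^\rb_{\cc}(\pOS)\subset\rD^\rb_{\rc}(\pOS)$, so Proposition \ref{P:reg} applied to $F=\pSol\shm$ furnishes a discrete subset $S_0=S_0(\shm)\subset S$ such that the morphism \eqref{E:Theta2} (obtained by left composition with $\Phi(\shm_*)$) is an isomorphism on $\XS^*$. Combining this with the ``associative law'' relating $\Rhom$ and~$\otimes$ (\cite[A.3(b)]{Ka2}, \cite[Ex.\,II.24(iii)]{KS}), we obtain the full chain of isomorphisms \eqref{E:DRISO2}. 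Taking $\Gamma(\XS^*,\shh^0(\cdot))$ yields \eqref{E:DRISO}, and we define
$$\Theta(\shm_*):\shm_*\to\RH^S(\pSol\shm)_*$$
to be the preimage of $\mathrm{id}_{\pSol\shm_*}$.

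For Property \ref{pro:PO}\eqref{pro:POa}, the identity $Li^*_s\Theta(\shm)=\Theta(Li^*_s\shm)$ follows by naturality once we know that $Li^*_s$ commutes with each of the functors entering the construction: restriction of $\RH^S$ reproduces $\RH$ on fibers by \cite[Prop.\,3.26]{MFCS2}, $\pSol$ is compatible with $Li^*_s$ by \cite[Prop.\,2.1]{MFCS} (and the formula $Li^*_s\Rhom_{\pOS}(F,\sho_{\XS})\simeq\Rhom(Li^*_s F,\sho_X)$), and the associative-law isomorphism commutes with $Li^*_s$. Tracking the identity of $\pSol\shm_*$ through the restricted diagram shows that at a fiber it is precisely the identity of $\Sol(Li^*_s\shm)$, whence $\Theta(Li^*_s\shm)$ is by construction Kashiwara's isomorphism recalled in the introduction. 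Property \ref{pro:PO}\eqref{pro:POb} is a direct consequence of the functoriality of Proposition \ref{P:reg} and of the composition with $\Phi(\bullet)$, applied on the open set where both $S_0(\shm)$ and $S_0(\shn)$ have been removed.

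Finally, once \eqref{pro:POa} is established, the restriction $Li^*_s\Theta(\shm_*)$ is Kashiwara's isomorphism for every $s\in S^*$, hence is an isomorphism in $\rD^\rb_{\rhol}(\shd_X)$ by \cite[Th.\,6.1.1]{KKIII}. Invoking \cite[Prop.\,1.9]{MFCS2}, which detects isomorphisms in $\rD^\rb_{\rhol}(\DXSa)$ through all the $Li^*_s$, we conclude that $\Theta(\shm_*)$ is itself an isomorphism. The main delicate point I expect is the compatibility of the associative-law isomorphism in \eqref{E:DRISO2} with the base change $Li^*_s$, which is what ultimately identifies the fiber $Li^*_s\Theta(\shm)$ with Kashiwara's canonical morphism rather than with some twisted avatar of it; all the remaining steps are an unwinding of definitions and a routine naturality argument.
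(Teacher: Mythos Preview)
Your proposal is correct and follows essentially the same route as the paper: construct $\Theta(\shm_*)$ as the preimage of the identity via Proposition~\ref{P:reg} and \eqref{E:DRISO}, verify \eqref{pro:POa} by base-change compatibility of all functors involved (the paper cites \cite[Prop.\,2.1]{MFCS} and \cite[Prop.\,3.29]{MFCS2} here rather than \cite[Prop.\,3.26]{MFCS2}), and deduce that $\Theta(\shm_*)$ is an isomorphism from \cite[Prop.\,1.9]{MFCS2}.

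The one genuine difference is your treatment of \eqref{pro:POb}. You invoke the naturality of \eqref{E:Theta2} and of the associative-law isomorphism in both variables, so that $\RH^S(\pSol\tau_*)\Theta(\shm_*)$ and $\Theta(\shn_*)\tau_*$ both correspond to $\pSol\tau_*$ under the mixed isomorphism $\Hom_{\DXSa}(\shm_*,\RH^S(\pSol\shn)_*)\simeq\Hom_{\pOSa}(\pSol\shn_*,\pSol\shm_*)$. The paper instead checks the equality $\RH^S(\tau_*)\Theta(\shm_*)=\Theta(\shn_*)\tau_*$ after applying $Li^*_s$ for every $s\in S^*$, reducing via the already-established \eqref{pro:POa} and \cite[Prop.\,3.29]{MFCS2} to the functoriality of Kashiwara's $\Theta$ in the absolute case, and then concludes by \cite[Prop.\,1.9]{MFCS2}. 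Your argument is more direct and avoids this second appeal to fiberwise detection, but as written it is terse; spelling out the bifunctoriality of the chain \eqref{E:DRISO2} (in particular that the morphism \eqref{E:Theta2} is induced by a morphism $\RH^S(F)[-d_X]\to\Rhom_{\pOS}(F,\sho_{\XS})$ independent of $\shm$, hence natural in both variables) would make it airtight.
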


\begin{proof}
The existence of the morphism $\Theta(\shm_*)$ is an immediate consequence of Proposition \ref{P:reg} as explained in the introduction. Moreover, according to \cite[Prop.\,2.1]{MFCS} and \cite[Prop.\,3.29]{MFCS2}, for any $s\in S$, the derived functor $Li^*_s$ commutes with all the derived functors appearing in \eqref{E:DRISO2}, so \eqref{pro:POa} of Property \ref{pro:PO} holds. As explained in the introduction, this implies that $\Theta(\shm_*)$ is an isomorphism.

Let us prove that $\Theta$ satisfies the generic functoriality property \eqref{pro:POb} of \ref{pro:PO}. Let $\tau:\shm\to\shn$ be a morphism in $\rD^\rb_{\rhol}(\DXS)$. In order to prove that, generically on~$S$, we have $\RH^S(\tau_*)\Theta(\shm_*)=\Theta(\shn_*)\tau_*$, it suffices to prove this equality after applying $Li^*_s$ for any $s\in S^*$, according to \cite[Prop.\,1.9]{MFCS2}. Now, from \eqref{pro:POa} of Property~\ref{pro:PO} already proved and from \cite[Prop.\,3.29]{MFCS2}, we are reduced to proving the result for $Li^*_s\tau:Li^*_s\shm\to Li^*_s\shn$, that is, we are reduced to the absolute case. The statement then follows by the functoriality of $\Theta$ in the absolute case.
\end{proof}

\begin{lemma}\label{L:ff}
The functor $\Sol$ is fully faithful in $\rD^\rb_{\rhol}(\DXS)$ in a generic sense, that is, the functor $\Sol$ induces generically a bifunctorial isomorphism
$$\Rhom_{\DXS}(\shm, \shn)\to \Rhom_{\pOS}(\Sol\shn, \Sol\shm)$$ for any $\shm, \shn\in\rD^\rb_{\rhol}(\DXS)$.
\end{lemma}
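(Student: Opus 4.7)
The plan is to chain together the three main ingredients already assembled in the paper: Theorem \ref{TRH1} (to identify $\shn_*$ with $\RH^S(\pSol\shn)_*$), Proposition \ref{P:reg} (to turn $\RH^S(\pSol\shn)$ into an ordinary $\Rhom$ into $\sho_{\XS}$), and the Hom-tensor associativity of \cite[A.3\,(b)]{Ka2} already used to derive \eqref{E:DRISO}. Since $\pSol$ sends $\rD^\rb_{\hol}(\DXS)$ into $\rD^\rb_\cc(\pOS)$, this strategy is directly applicable.

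Concretely, I would set $S_0 := S_0(\shm) \cup S_0(\shn)$, where $S_0(\shn)$ is the discrete set of Theorem \ref{TRH1} applied to $\shn$ and $S_0(\shm)$ is the one of Proposition \ref{P:reg} applied to $\shm$ with the choice $F = \pSol\shn$; write $S^* = S \setminus S_0$. Then by Theorem \ref{TRH1}, $\Theta(\shn_*) : \shn_* \isom \RH^S(\pSol\shn)_*$ gives
\[
\Rhom_{\DXSa}(\shm_*, \shn_*) \isom \Rhom_{\DXSa}(\shm_*, \RH^S(\pSol\shn)_*).
\]
Next, Proposition \ref{P:reg} applied to $F = \pSol\shn$, after a shift by $[d_X]$, produces
\[
\Rhom_{\DXSa}(\shm_*, \RH^S(\pSol\shn)_*) \isom \Rhom_{\DXSa}\bigl(\shm_*, \Rhom_{\pOSa}(\pSol\shn_*, \sho_{\XS^*})[d_X]\bigr),
\]
and the identity $\pSol = \Sol[d_X]$ rewrites the target as $\Rhom_{\DXSa}(\shm_*, \Rhom_{\pOSa}(\Sol\shn_*, \sho_{\XS^*}))$. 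Finally, applying the associativity formula exactly as in the derivation of \eqref{E:DRISO} swaps the two $\Rhom$'s and yields
\[
\Rhom_{\DXSa}(\shm_*, \Rhom_{\pOSa}(\Sol\shn_*, \sho_{\XS^*})) \simeq \Rhom_{\pOSa}\bigl(\Sol\shn_*, \Rhom_{\DXSa}(\shm_*, \sho_{\XS^*})\bigr) = \Rhom_{\pOSa}(\Sol\shn_*, \Sol\shm_*).
\]
Composing the three isomorphisms produces the desired generic isomorphism $\Rhom_{\DXS}(\shm,\shn) \to \Rhom_{\pOS}(\Sol\shn, \Sol\shm)$ over $\XS^*$.

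For the bifunctoriality claim, each of the three arrows above is induced by a natural construction: $\Theta$ is generically functorial by part \eqref{pro:POb} of Property \ref{pro:PO}, the isomorphism of Proposition \ref{P:reg} is bifunctorial in its two entries by construction, and associativity is natural in both arguments. Hence the composite is bifunctorial in $(\shm,\shn)$ on $\XS^*$. There is no genuine new obstacle here: the whole content of the lemma has been packaged into Theorem \ref{TRH1} and Proposition \ref{P:reg}, and the only nontrivial bookkeeping is to make sure the discrete set $S_0$ may be enlarged to depend on both $\shm$ and $\shn$ (which is harmless, since generic statements are stable under finite union of discrete sets) and to keep track of the shift $[d_X]$ when passing between $\Sol$ and $\pSol$.
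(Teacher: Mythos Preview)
Your proof is correct and follows essentially the same route as the paper's: apply $\Theta(\shn_*)$ from Theorem \ref{TRH1}, then the comparison of Proposition \ref{P:reg} (which in the paper is phrased as the map induced by $\Phi(\shn)$), and finally the $\Rhom$-$\otimes$ associativity to unwind into $\Rhom_{\pOSa}(\Sol\shn_*,\Sol\shm_*)$. Your additional bookkeeping on the shift $[d_X]$ and on the discrete set $S_0$ is accurate and harmless.
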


\begin{proof} Let us start by constructing the morphism. It is obtained as the following composition, where we use the isomorphism $\Theta(\shn_*)$ given by Theorem \ref{TRH1} above:
\begin{multline*}
\Rhom_{\DXSa}(\shm_*, \shn_*)\simeq\Rhom_{\DXSa}(\shm_*, \RH^ S(\pSol \shn)_*)\\
\shoveright{\underset{(a)}{\to}\Rhom_{\DXSa}(\shm_*, \Rhom_{\pOS}(\Sol\shn, \sho_{X\times S})_*)}\\[-4pt]
\simeq\Rhom_{\pOSa}(\Sol \shn_*, \Sol \shm_*),
\end{multline*}
where $(a)$ is induced by $\Phi(\shn)$ (\cf\eqref{E RHS6}) and the last isomorphism follows by the ``associative properties'' and \cite[Ex.\,II.24\,(iii)]{KS}. According to Proposition \ref{P:reg}, the morphism $(a)$ is an isomorphism and the result follows.
\end{proof}

\begin{remark}
If $\shm$ is regular holonomic and admits locally a single generator~$u$ such that $\shj:=\{P\in\DXS\mid Pu=0\}$ is monogenic, it is easy to verify that the associated discrete set $S_0\subset S$ mentioned in Proposition \ref{P:RH1} can be taken to be the empty set:

The assumption on $\Char(\shm)$ entails that we can choose as a generator of~$\shj$ an operator $P$ of the form
$$P(x,s,\partial_x)=x^j\partial_x^m+\sum_{k<m} a_{k}(x,s)\partial^k _x$$
The assumption of regularity means that, for each fixed $s$, $P(x,s,\partial_x)$ has a regular singularity in $x=0$ as a section of $\shd_X$. Hence each coefficient $a_{k}(x,s)$ has a zero of order at least $j-m+k$ at $x=0$. It follows that the coefficient $a''_{N_0}$ in the proof of Proposition \ref{P:RH1} is equal to $1$ (it is the coefficient of the term $x^j\partial_x^m$).

However we cannot generalize this result to arbitrary regular holonomic modules because, contrary to the absolute case, we do not have the tools to perform a devissage.
\end{remark}

\providecommand{\eprint}[1]{\href{http://arxiv.org/abs/#1}{\texttt{arXiv\string:\allowbreak#1}}}

\end{document}